\documentclass[12pt,a4]{article}

%\textheight=210  true mm

\topmargin=  12 true mm

%\oddsidemargin=10 true mm

% SPACING
% \linespread{1.2}
%

 \textwidth=155mm
 \textheight=220mm

 \voffset=-25mm
 \oddsidemargin=5mm
 \evensidemargin=0mm

\usepackage{amsmath,graphicx,amsthm,amssymb,mathrsfs} %, refcheck}

\newcommand{\exn}{{\bf E}}
\newcommand{\pr}{{\bf P}}
\newcommand{\Na}{\mathbb{N}}
\newcommand{\R}{\mathbb{R}}

\newcommand{\Fi}{\mathcal{F}}
\newcommand{\Gi}{\mathcal{G}}
\newcommand{\Hi}{\mathcal{H}}
\newcommand{\Bi}{\mathcal{B}}

\newcommand{\al}{\alpha}

\newcommand{\la}{\lambda}

\newcommand{\si}{\sigma}

\newcommand{\ind}{{\bf 1}}

\newtheorem{theo}{Theorem}

\newtheorem{lemo}{Lemma}

%%%%%%%%%%%%%%%%%%%%%%%%%%%%%%%%%%%%%%%%%%%%%%%%%%%%%%%%%%%%%%%%%%%%%%%%%%%%%%%%%
%%%%%%%%Patrick He$^2$ and %%%%%%%%%%%%%%%%%%%%%%%%%%%%%%%%%%%%%%%%%%%%%%%%%%%%%%%%%%%%%%%%%%%%%%%%%

\title{Parisian ruin with random deficit-dependent delays for spectrally negative L\'evy processes}
\author{Duy Phat Nguyen$^1$ and Konstantin Borovkov$^2$ }

%\date{}
%\date{}

\begin{document}
	\maketitle
	
\footnotetext[1]{School of Mathematics and Statistics, The University of Melbourne, Parkville 3010, Australia.  }	
	
\footnotetext[2]{School of Mathematics and Statistics, The University of Melbourne, Parkville 3010, Australia. Corresponding author,  e-mail: borovkov@unimelb.edu.au.}

\begin{abstract}
We consider an interesting   natural  extension to the Parisian ruin problem under the assumption that  the risk reserve dynamics are given by a spectrally negative L\'evy process. The distinctive feature of this extension is that  the distribution of the random implementation delay windows'  lengths can depend  on  the  deficit at the   epochs when the risk reserve process turns negative, starting a new negative excursion. This includes the possibility of an immediate ruin  when the deficit hits a certain subset. In this general setting, we derive a closed-from expression for the Parisian ruin probability and the joint Laplace transform of the Parisian ruin time and the deficit at ruin.

		\smallskip
		{\it Key words and phrases:} Parisian ruin; random delay; spectrally negative L\'evy process; scale function.
		
		\smallskip
		{\em AMS Subject Classification:} 60G51; 60K40.
	\end{abstract}

\section{Introduction and main results}

The concept of Parisian ruin  was first introduced in actuarial risk theory by Dassios and Wu~\cite{DaWu08} in 2008: ``Parisian type ruin will occur if the surplus falls below zero and stays below zero for a continuous time interval of length~$d.$ In some respects, this might be a more appropriate measure of risk than classical ruin as it gives the office some time to put its finances in order." 
The  time period during which the surplus is allowed to remain negative is called implementation delay (or grace) period, often  referred to just as the delay period.

The idea (and the name as well) of such a concept goes back to the so-called Parisian options whose payoffs depend on the lengths of the excursions of the underlying asset prices above or below a flat barrier. For example, 
the owner of a Parisian down-and-out option will lose the option if the underlying asset price drops below  a given  level and stays  constantly below that level for a time interval longer than a given quantity~$d.$ Stopping times of this kind were first considered by Chesney et al.~\cite{ChJeYo97}. 

Over the last decade, analysis of Parisian ruin probabilities and times in different settings has become a popular topic in the literature. First we will mention papers where the delay period length~$d$  was assumed to be {\em  deterministic and fixed\/} (i.e.,   depending neither of the deficit at the beginning of a negative excursion nor on any other quantity, and remaining the same for all negative excursions of the risk reserve process).  

Dassios and Wu~\cite{DaWu08}  derived the Laplace transform of the time until the Parisian ruin and the probability thereof for the classical Cram\'er--Lundberg (CL) model.  Loeffen et al.~\cite{LoCzPa13} derived  an elegant compact formula for  the Parisian ruin probability in the case where the surplus process is modelled by a spectrally negative L\'evy process (SNLP) $X=\{X_t\}_{t\ge 0}$  (whose trajectories may be of unbounded variation), the answer  involving only the scale function of $X$ and the distribution of~$X_d$. Czarna~\cite{Cz16} studied, also in the SNLP framework, Parisian ruin probabilities with an ``ultimate bankruptcy level", meaning that ruin will also occur whenever the deficit reaches a given deterministic negative level. Simpler proofs and further results for that setting were obtained in Czarna and Renaud~\cite{CzRe16}. 

Li et al.~\cite{LiWiWo18} and Lkabous~\cite{Lk19}  studied the concept of Parisian ruin   under the ``hybrid observation scheme" for SNLPs, where the surplus process is monitored discretely at arrival epochs of an independent Poisson process (that can be interpreted as the observation times of the regulatory body), but is continuously observed once the process value drops below zero.   

Lkabous et al.~\cite{LkCzRe17} studied Parisian ruin for a refracted SNLP  model assuming that the premium payment rate is higher when the process is below zero. In Czarna et al.~\cite{CzLiPaZh17},  the joint law of the Parisian ruin time and the number of claims until  that time was derived for the CL~model. A compact formula for the Parisian ruin probability for a spectrally negative Markov additive risk process was obtained  by Zhang and Dong~\cite {ZhDo18}. The probability was expresed in terms of the scale matrix and transition rate matrix of the process. 

A more flexible (and more realistic) model with {\em random delays\/} was first considered in  Landriault et al.~\cite{LaReZh14}. In their setup, along with the risk reserve SNLP with trajectories of locally bounded variation, there is an independent of it sequence of i.i.d.\ random variables that serve as implementation delay times (so that for each new negative excursion of the process, there is a new independent delay time).  The authors  studied the Laplace transform of the Parisian ruin time when delays were exponentially distributed or followed Erlang mixture distributions (noting that switching from deterministic delays to stochastic ones with such distributions improves the tractability of the resulting expressions). They also studied a version of the two-sided exit problem  ``when the first passage time below level zero is substituted by the Parisian ruin time''. Frostig and Keren-Pinhasik~\cite{FrKe20} studied Parisian ruin with an ultimate bankruptcy barrier (as  in~\cite{Cz16} in the case  of deterministic delay) and i.i.d.\ exponentially (and then Erlang) distributed random delays. Baurdoux et al.~\cite{BaPaPeRe20}  studied the Gerber--Shiu distribution at Parisian ruin with exponential implementation delays in the SNLP setup.

In the present paper, we consider a natural interesting extension to the Parisian ruin problem with a risk reserve SNLP. The distinctive feature of this extension is that  the distribution of the random delay windows'  lengths can depend  on  the  deficit at the   epochs when the risk reserve process turns negative, starting a new negative excursion. This includes the possibility of an immediate ruin  when the deficit hits a certain subset. In this general setting, we derive a closed-from expression for the Parisian ruin probability and the joint Laplace transform of the Parisian ruin time and the deficit at ruin.  Our results are illustrated by an example  where the risk reserve follows the classical CL dynamics whereas the delay period distributions are finite mixtures of Erlang distributions with parameters depending on the deficit value at the beginning of the respective negative excursion.

%\[
%\chi:= X_{\tau_0^-}\quad  \mbox{on the event $\{\tau_0^-<\infty \}$}
%\]
%at the epoch when the reserve process turns negative (for the first time or after a successful recovery during a past delay window).

More formally, we    assume in this paper that  $X:= \{X_t\}_{t\ge 0}$ is an SNLP   with c\`adl\`ag  trajectories   of locally   bounded variation, starting at $X_0=u\in \R$. To indicate this for different values of $u$, the respective probability and expectation symbols will be endowed with subscript $u$, as in~$\exn_u.$ The  cumulant generating function   $\psi (\theta):=\ln \exn_0  e^{\theta X_1}$ of such a  process~$X$  is clearly finite for all $\theta \ge 0$ and has the form
\begin{equation}
\psi (\theta  ) := a\theta + \int_{(-\infty, 0)} (e^{\theta x }-1) \Pi (dx), \quad \theta \ge 0,
\label{Cumul}
\end{equation}
where the measure $\Pi$ is such that $\int_{(-1, 0)} | x| \Pi (dx)<\infty.$  As well-known, this means that our process  is just a linear drift minus a pure jump subordinator (see, e.g., Section~8.1 in~\cite{Ky14}). We   also assume satisfied the standard safety loading  condition
\begin{equation}
\exn_0 X_1 >0
\label{Solvency}
\end{equation}
(clearly, $\exn_0 |X_1|<\infty$ under the above condition as $X$ is spectrally negative).

Denote by $\mathbb{F}:=\{\Fi_t\}_{t\ge 0}$ the natural filtration for~$X$. For $x, y\in \R,$ introduce the first hitting times
\begin{align*}
\tau_{x}^-   := \inf\{t>0: X_t <x \} \quad
\mbox{and}\quad
\tau_{y}^+   := \inf\{t>0: X_t >y \}.
\end{align*}
In view of~\eqref{Solvency}, $\tau_{x}^-$ is an improper random variable when $x\le X_0$. Setting $\tau_{0,0}^+  :=0,$ we further define recursively for $k=1,2,\ldots$ the following (improper, due to~\eqref{Solvency}) $\mathbb{F}$-stopping times:
\begin{align*}
\tau_{0,k}^-   := \inf\{t>\tau_{0,k-1}^+: X_t <0 \} \quad
\mbox{and}\quad
\tau_{0,k}^+   := \inf\{t>\tau_{0,k}^-: X_t >0 \}.
\end{align*}
Note that, due to \eqref{Solvency}, the   time $\tau_{0,k}^+$ is always finite on the event  $\{\tau_{0,k}^-<\infty \}$.

In words,    $\tau_{0,k}^-$  is the time when the $k$th negative excursion of the process~$X$ starts and  $\tau_{0,k}^+$ is the time when that excursion  ends. If $\tau_{0,k-1}^-<\infty$ but $\tau_{0,k}^-=\infty$ for some $k\ge 1$, then  there are only $k-1$ negative excursions of the risk reserve process.

To formally construct  random delay times,   suppose that $P_x(B)$ is a stochastic kernel on $(-\infty, 0)\times \Bi([0,\infty))$. That is, for  any fixed  $B\in \Bi([0,\infty))$, $P_{x} (B)$ is a measurable function of~$x$  and, for any fixed $x< 0$,  $P_{x} (B)$  is a probability measure in~$B\in  \Bi([0,\infty))$.  Further, let  $F_x(s):= P_x((-\infty, s]),$ $s\ge 0,$ be the distribution function of $P_x, $ $\overline{F}_x(s):=1-F_x(s) $ its right tail. Denote by
\[
F_{x}^\leftarrow (y):=\inf\{ s\ge 0: F_x (s) \ge y\}, \quad y\in (0,1),
\]
the generalized inverse of $F_x$. Note that $ F_{x}^\leftarrow (y) ,$ $(x,y)\in D:=  (-\infty, 0)\times (0,1) ,$ is a measurable function. Indeed, since $F_x (y)$ is right-continuous and non-decreasing in~$y$, for any $z\ge 0$  one has  $\{(x,y)\in D: F_{x}^\leftarrow (y)\le z\}
 %= \{(x,y): F_{x} (a)\ge y\}
 = \{(x,y)\in D: F_{x} (z) -y\ge 0\}  $, which is a measurable set on the plane as both $F_{x} (z)$ and $y$ are measurable functions of~$(x,y).$

Further, let $\{U_n\}_{n\ge 1}$ be a sequence of i.i.d.\ random variables  uniformly distributed on $(0,1)$ that is  independent of the process~$X$. The length  $\eta_k $   of the $k$-th   delay window, $k=1,2,\ldots,$ is then defined on the event $\{\tau_{0,k}^-<\infty\}$   as
\[
\eta_k:= F_{\chi_k}^\leftarrow (U_k), \quad \mbox{where }\ \chi_k:= X_{\tau_{0,k}^-}
\]
(on $\{\tau_{0,k}^-=\infty\}$ we can leave both~$\chi_k$ and~$\eta_k $ undefined). Note that this allows one to model situations where    $\eta_k=0$ for some values of $\chi_k$. This happens, for instance, in   cases where   delay is only granted when the deficit $\chi_k$ is above a certain negative threshold.

We say that Parisian ruin occurs in our model if
\[
N:=
\inf \{k\ge 1: \tau_{0,k}^-<\infty, \tau_{0,k}^- + \eta_k <\tau_{0,k}^+\} <\infty,
\]
and define on the event $\{N<\infty \}$ the Parisian ruin time as
\[
T:= \tau_{0,N}^- + \eta_N.
\]

To state our results, we have to recall   the definition of the scale functions. For $q\ge 0,$ the $q$-scale  function $W^{(q)}$ for the process $X$ is defined as a  function on $\R$ such that (i)~$W^{(q)}(x)=0$ for $x<0 $  and (ii)~$W^{(q)}(x)$ is continuous on $[0,\infty)$ and 
\begin{align}
\int_{[0,\infty)} e^{-\beta x} W^{(q)}(x) dx = \frac1{\psi (\beta)-q}, \quad \beta > \Phi (q), 
\label{q-scaled1}
\end{align}
where  $\Phi (q):= \sup\{\theta \ge 0: \psi (\theta)=q\},$ $q\ge 0$
(see, e.g., Section~8.2 in~\cite{Ky14}). One refers to $W:=W^{(0)}$ as just the scale function for~$X$. Note that the $q$-scale  functions can be obtained  as the scale  functions for SNLPs with ``tilted distributions": for $q\ge 0,$ 
\begin{align}
 W^{(q)}(x) = e^{\Phi (q) x}W_{\Phi (q)}(x), \quad x\in\R,
\label{q-scaled2}
\end{align}
where $W_\nu (x)$ is the scale function for the L\'evy process with the  cumulant  function $\psi_\nu (\theta) : = \psi (\theta+\nu)- \psi (\nu)$ 
(Proposition~2 in~\cite{Su08}).

Several  important characteristics of and fluctuation identities for SNLPs can be expressed  in terms of their scale functions. In particular, the distribution $\pr_u (\chi_1\in \cdot \ ,\tau_0^-<\infty) $ of the first negative value~$\chi_1$ of the process  given $X_0=u>0$ has (defective) density $h_u(x)$ that can be written as
\begin{align*}
h_u(x)  = \int_{(0-,u]} \Pi ((-\infty,x+z-u])dW(z), \quad x<0,
%\label{h_u}
 \end{align*}
see, e.g., p.\,277 in~\cite{Ky14} (note that the formula for that distribution on that page in~\cite{Ky14}  contains a typo: instead of $\Pi$ there  must be the L\'evy measure for the spectrally positive  process~$-X$). Another formula we will use below provides an expression for the ``incomplete Laplace transform" for $\tau^+_y$: for $q\ge 0$ and $t,y>0,$
\begin{align}
 \exn_0 (e^{-q\tau^+_y}; \tau^+_y \le t ) = e^{-qt}\Lambda^{(q)} (-y,t),
 \label{exp_tau}
\end{align}
where 
\[
\Lambda^{(q)} (x,t):= \int_0^\infty W^{(q)} (x+z)\frac{z}{t} \pr_0 (X_t\in dz), \quad x\in \R, \ t>0 
\]
(see, e.g., Lemma~4.2 in~\cite{LoPaSu18}); one could also compute the left-hand side of~\eqref{exp_tau} using the expression for the  distribution function of~$\tau^+_y$ provided in~\eqref{function_G}.

We also note  that finding a closed-form expression for the scale function is a non-trivial problem. A ``robust" numerical method for computing $W^{(q)}$ based on~\eqref{q-scaled1} and numerical inversion of~\eqref{q-scaled2} for $W_{\Phi (q)}$ was described  in~\cite{Su08}, whereas paper~\cite{EgYa14} presents a possible ``phase-type-fitting  approach" to approximating   scale functions and~\cite{HuKy10} presented several examples where closed form expressions for the    scale function  are  available and described a methodology for finding such expression.

Finally, we set
\begin{align}
\label{function_G}
G_y (t):=\pr_0 (\tau^+_y \le t)
= - y\frac{\partial}{\partial y }\int_0^t \pr_0 (X_s>y)\frac{ds}s, \quad y,t >0,
\end{align}
where the expression on the right-hand side comes from the celebrated Kendall's formula (see, e.g.,~\cite{BoBu01} or p.\,725 in~\cite{Bi75}, \cite{Zo64}) and let
 \begin{align}
\label{function_K}
K(x)&:= \exn_0\overline{F}_x(\tau^+_{|x|})
=\int_0^\infty \overline{F}_x(t) d G_{|x|}(t) , \quad x<0,
\\
\label{function_H}
H(v) & := \int_{-\infty}^0 K(x) h_v(x) dx,\quad  v\ge 0.
\end{align}
Our first  result is stated in the following assertion.

\begin{theo}\label{T1}
 Under the above assumptions, the  probability of no Parisian ruin when the initial reserve is $X_0=u\ge 0$ is equal to
\begin{align}
% \pr_u (N<\infty) & = \begin{align*}
 \pr_u (N=\infty)
 & =
 \exn_0 X_1\biggl(
 W(u) +  \frac{W(0)}{1- H(0)}  H(u)\biggr).
 \label{Formula_T1}
 \end{align}
\end{theo}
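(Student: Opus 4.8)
The plan is to exploit the regenerative structure of the problem via the strong Markov property of $X$ at the ends of the successive negative excursions. The starting observation is the probabilistic meaning of the two auxiliary functions $K$ and $H$. Given that a negative excursion begins with deficit $\chi_k=x<0$, the strong Markov property at $\tau_{0,k}^-$ together with the spatial homogeneity of the L\'evy process shows that the excursion length $\tau_{0,k}^+-\tau_{0,k}^-$ is distributed as $\tau_{|x|}^+$ under $\pr_0$, with distribution function $G_{|x|}$. Since the delay $\eta_k=F_{\chi_k}^\leftarrow(U_k)$ has, conditionally on $\chi_k=x$, distribution $F_x$ and is independent of the excursion (as $\{U_n\}$ is independent of $X$), the quantity $K(x)=\int_0^\infty\overline F_x(t)\,dG_{|x|}(t)$ is precisely the conditional probability that this single excursion does not trigger Parisian ruin, i.e.\ that $\eta_k\ge\tau_{0,k}^+-\tau_{0,k}^-$. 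Averaging over the deficit distribution $h_u$ then identifies $H(u)=\exn_u[K(\chi_1);\tau_0^-<\infty]$ as the probability that the first excursion occurs and does not cause ruin.

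The crucial structural point is that, because $X$ is spectrally negative and of bounded variation with positive drift, it crosses every upward level continuously, so $X_{\tau_{0,k}^+}=0$ on $\{\tau_{0,k}^-<\infty\}$. Hence each surviving excursion ends with the process restarting from the \emph{same} state $0$, and the strong Markov property yields the one-step renewal decomposition
\begin{align*}
\pr_u(N=\infty)=\pr_u(\tau_0^-=\infty)+H(u)\,\pr_0(N=\infty).
\end{align*}
Indeed, either the process never goes below zero (no ruin, contributing $\pr_u(\tau_0^-=\infty)$), or the first excursion starts; conditionally on its deficit it avoids ruin with probability $K(\chi_1)$, after which the process regenerates from $0$ and avoids all further ruin with probability $\pr_0(N=\infty)$.

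Setting $u=0$ in the displayed identity gives $\pr_0(N=\infty)\,(1-H(0))=\pr_0(\tau_0^-=\infty)$, whence
\begin{align*}
\pr_0(N=\infty)=\frac{\pr_0(\tau_0^-=\infty)}{1-H(0)},
\end{align*}
and substituting this back produces
\begin{align*}
\pr_u(N=\infty)=\pr_u(\tau_0^-=\infty)+\frac{H(u)}{1-H(0)}\,\pr_0(\tau_0^-=\infty).
\end{align*}
It remains to insert the classical survival-probability identity for an SNLP drifting to $+\infty$, namely $\pr_u(\tau_0^-=\infty)=\psi'(0+)W(u)=\exn_0X_1\,W(u)$ (valid for $u\ge0$, and giving $\exn_0X_1\,W(0)$ at $u=0$). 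This immediately yields formula~\eqref{Formula_T1} after factoring out $\exn_0X_1$.

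The main obstacle I expect is the rigorous justification of the regeneration step rather than the algebra. One must carefully verify that the endpoints $\tau_{0,k}^+$ are $\mathbb F$-stopping times at which the strong Markov property applies, that $X_{\tau_{0,k}^+}=0$ (creeping upward), and that the fresh uniform variable $U_k$ makes $\eta_k$ conditionally independent of the excursion length given $\chi_k$, so that the excursions behave as independent trials governed by $K$. A secondary technical point is that the identity between the ``no ruin'' event $\{\eta_k\ge\tau_{0,k}^+-\tau_{0,k}^-\}$ and the event measured by $K$, namely $\{\eta_k>\tau_{0,k}^+-\tau_{0,k}^-\}$, requires the excursion length to share no atoms with $F_{\chi_k}$, which follows from the absolute continuity of $\tau_{|x|}^+$ implicit in Kendall's formula~\eqref{function_G}.
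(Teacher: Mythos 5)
Your proposal is correct and follows essentially the same route as the paper: a one-step renewal decomposition at the first negative excursion, identifying $\exn_u(K(\chi_1);\tau_0^-<\infty)=H(u)$ as the probability that the first excursion occurs and survives, regeneration from $0$ by upward creeping, solving for $\pr_0(N=\infty)$ by setting $u=0$, and inserting $\pr_u(\tau_0^-=\infty)=\exn_0X_1\,W(u)$. The paper merely supplies the measure-theoretic details you flag as the main obstacle (via two nested conditionings at $\Fi_{\tau_0^-}$ and $\Fi_{\tau_{0,1}^+}$ and an auxiliary lemma on conditional expectations), and the atom issue you raise is harmless since $U_1$ has a continuous distribution.
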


One can also compute the joint Laplace transform for the Parisian ruin time and the deficit at the time of that ruin. To state our result, we need to introduce further notations. For $ v,w\ge 0$ and $x<0,$ set 
\begin{align}
M_1 ( v,w, x)& :  = 
%\exn \big[ e^{  (\psi(w)-v) F_{\chi_1}^\leftarrow (U_1)+  w  \chi_1} -  \Lambda^{(\psi (w))}  (\chi_1,F_{\chi_1}^\leftarrow (U_1)) \big|\chi_1=x\big]
%\notag
%\\
%& = 
\int_0^1 \bigl[ e^{  (\psi(w)-v) F_{x}^\leftarrow (s)+  w x} 
 -   e^{-v F_{x}^\leftarrow (s) }\Lambda^{(\psi (w))}  (x,F_{x}^\leftarrow (s))\bigr] ds ,
\label{M1}
\\
M_2 (v, x)& :  =   \exn_0  e^{-v   \tau_{|x|}^+}  \ind(\tau_{|x|}^+ \le F^{\leftarrow}_{x} (U_1)   ) 
=\int_0^1   e^{   -v  F_{x}^\leftarrow (s) }   \Lambda^{(v)}  (x,F_{x}^\leftarrow (s)) \, ds ,
\label{M2}
\end{align}
where the last equality holds true since  $\exn_0  e^{-v   \tau_{|x|}^+}  \ind(\tau_{|x|}^+ \le r  ) =e^{-vr}\Lambda^{(v)} (x,r)$ by Lemma~4.2 in~\cite{LoPaSu18} and $U_1$ is independent of~$\tau_{|x|}^+$.

Finally, assuming in addition that $u\in [0,b]$, $b>0,$ we set
%,  for $j=1,2,$ 
	\begin{align*}
Q_1 (u,v,w) 
& := \exn_u e^{-v\tau_0^-} \ind(\tau_0^- < \tau_b^+) M_1  (v,w,\chi_1)
\\
& =
\int_0^b \int_{(-\infty,-y)} M_1 (v,w,y+\theta)
 \Big(\frac{W^{(v) }(u)W^{(v) }(b-y)}{W^{(v) }(b )}- W^{(v) }(u-y)\Big) \Pi (d\theta)dy,
 \\
 Q_2 (u,v ) 
 & := \exn_u e^{-v\tau_0^-} \ind(\tau_0^- < \tau_b^+) M_2  (v, \chi_1)
 \\
 & =
 \int_0^b \int_{(-\infty,-y)} M_2 (v, y+\theta)
 \Big(\frac{W^{(v) }(u)W^{(v) }(b-y)}{W^{(v) }(b )}- W^{(v) }(u-y)\Big) \Pi (d\theta)dy
\end{align*}
(the second equalities in both formulae   follow from the result of Exercise~10.6 on p.\,303 in~\cite{Ky14}).

\begin{theo}\label{T2}
	Under the above assumptions, for $b, v, w \ge 0  $ and $u\in [0,b],$  one has
	\begin{align*}
	% \pr_u (N<\infty) & = \begin{align*}
	\exn_u (e^{-v T + w X_T}; T<\tau_b^+)
	& =Q_1 (u,v,w) + \frac{Q_1 (0,v,w)Q_2 (u,v )}{1- Q_2 (0,v )}.
	%\label{Formula_T2}
	\end{align*}
\end{theo}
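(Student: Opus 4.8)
The plan is to exploit the regenerative structure of the model at the epochs where the risk reserve returns to level~$0$ after a \emph{surviving} negative excursion, reducing everything to a single first-excursion analysis plus one scalar linear equation. Write $\Theta(u):=\exn_u(e^{-vT+wX_T};\,T<\tau_b^+)$ for $u\in[0,b]$; since $X_T<0$ and $v,w\ge0$ one has $0\le\Theta(u)\le1$, so all quantities below are finite. The functional equation I aim to establish is
\[
\Theta(u)=Q_1(u,v,w)+Q_2(u,v)\,\Theta(0),
\]
after which setting $u=0$ and solving for $\Theta(0)$ immediately produces the stated formula.

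First I would pin down the meaning of $M_1$ and $M_2$ as the discounted single-excursion \emph{ruin} and \emph{survival} transforms. Fix an excursion starting at deficit $\chi=x<0$; by the strong Markov property and spatial homogeneity its length is distributed as $\tau_{|x|}^+$ under $\pr_0$, and on $\{\eta<\tau_{|x|}^+\}$ the path value at the elapsed time $\eta$ equals $x+X_\eta$ with $X$ under $\pr_0$, where $\eta=F_x^\leftarrow(U_1)$ is the (independent) delay. The survival identity $M_2(v,x)=\exn\bigl[e^{-v\tau_{|x|}^+}\ind(\tau_{|x|}^+\le\eta)\bigr]$ is exactly the second equality recorded in~\eqref{M2}. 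For the ruin contribution I would verify
\[
\exn\bigl[e^{-v\eta}\,e^{w(x+X_\eta)}\,\ind(\eta<\tau_{|x|}^+)\bigr]=M_1(v,w,x),\qquad X\sim\pr_0,\ \eta=F_x^\leftarrow(U_1)\ \text{independent}.
\]
Conditioning on $\eta=r:=F_x^\leftarrow(s)$ reduces the left-hand integrand to $e^{-vr+wx}\,\exn_0[e^{wX_r};\,r<\tau_{|x|}^+]$, so the crux is the identity
\[
e^{wx}\,\exn_0\bigl[e^{wX_r};\,\tau_{|x|}^+\le r\bigr]=\Lambda^{(\psi(w))}(x,r),
\]
which follows from the strong Markov property at $\tau_{|x|}^+$ (where $X$ creeps to level $|x|$, as it is spectrally negative), the moment identity $\exn_0 e^{wX_t}=e^{\psi(w)t}$ applied to the post-$\tau_{|x|}^+$ increment, and formula~\eqref{exp_tau} with $q=\psi(w)$, $y=|x|$. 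Combining this with the splitting $\exn_0 e^{wX_r}=\exn_0[e^{wX_r};r<\tau_{|x|}^+]+\exn_0[e^{wX_r};\tau_{|x|}^+\le r]$ gives the clean cancellation reproducing precisely the integrand of~\eqref{M1}.

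Next I would lift these to $Q_1,Q_2$ by conditioning at the start of the first excursion: applying the strong Markov property at $\tau_0^-$ on $\{\tau_0^-<\tau_b^+\}$ and integrating against the law of $\chi_1$ yields
\[
Q_1(u,v,w)=\exn_u\bigl[e^{-vT+wX_T};\,N=1,\ \tau_0^-<\tau_b^+\bigr],\qquad Q_2(u,v)=\exn_u\bigl[e^{-v\tau_{0,1}^+}\ind(\tau_0^-<\tau_b^+,\ D_1\le\eta_1)\bigr],
\]
with $D_1:=\tau_{0,1}^+-\tau_{0,1}^-$ and using $e^{-v\tau_0^-}e^{-vD_1}=e^{-v\tau_{0,1}^+}$; the scale-function closed forms then follow from Exercise~10.6 in~\cite{Ky14}, exactly as in the excerpt. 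Finally I assemble the renewal equation. On $\{T<\tau_b^+\}$ one has $\tau_0^-\le T<\tau_b^+$, so the reserve must go negative before reaching~$b$, and the possibility of hitting~$b$ first contributes nothing. Splitting $\{\tau_0^-<\tau_b^+\}$ according to ruin on excursion~$1$ ($N=1$) versus survival ($D_1\le\eta_1$) gives the terms $Q_1(u,v,w)$ and, using that a negative excursion stays below~$b$ and the strong Markov property at the return epoch $\tau_{0,1}^+$ (reserve back at~$0$), the term $Q_2(u,v)\,\Theta(0)$; here $T=\tau_{0,1}^++T'$, $X_T=X'_{T'}$, and $\{T<\tau_b^+\}=\{T'<(\tau_b^+)'\}$ for the post-$\tau_{0,1}^+$ process are what make the factor $\Theta(0)$ appear. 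This gives $\Theta(u)=Q_1(u,v,w)+Q_2(u,v)\Theta(0)$, whence $\Theta(0)=Q_1(0,v,w)/(1-Q_2(0,v))$ and back-substitution. The main obstacle I anticipate is the single-excursion identity for $M_1$—in particular the algebraic cancellation producing $\Lambda^{(\psi(w))}$—together with the careful bookkeeping of the barrier constraint $T<\tau_b^+$ across the regeneration; solvability needs $Q_2(0,v)<1$, which holds since from level~$0$ there is positive probability either of reaching~$b$ before going negative or of ruin on the first excursion, both excluded from $Q_2$.
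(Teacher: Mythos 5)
Your proposal is correct and follows essentially the same route as the paper: the same split of the first negative excursion into ruin ($N=1$, giving $M_1$ via the strong Markov property at $\tau^+_{|x|}$ and formula~\eqref{exp_tau} with $q=\psi(w)$) versus survival (giving $M_2$), the same regeneration at $\tau_{0,1}^+$ producing $\Theta(u)=Q_1(u,v,w)+Q_2(u,v)\Theta(0)$, and the same resolution by setting $u=0$. Your added observation that $Q_2(0,v)<1$ justifies the division is a small point the paper leaves implicit.
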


\section{Proofs}

%\subsection{Proof of Theorem~\ref{T1}}

We will start with the following simple auxiliary assertions that may be well-known.

\begin{lemo}\label{L1}
Let $\xi$ and $\zeta $ be random variables on a common probability space, $\mathcal{G}$ be a sub-$\si$-algebra on that space.

{\rm (i)}~If $\exn (|\xi| + |\zeta|)<\infty$   and $\xi$ is independent of the pair $(\zeta,\mathcal{G})$ then $$\exn (\xi \zeta | \Gi)=\exn \xi\cdot  \exn (\zeta | \Gi) .$$

{\rm (ii)}~If $\zeta$ is  $\mathcal{G}$-measurable, $\xi $ is independent of $\mathcal{G}$ and has distribution function $G,$   then $$\exn (\ind (\xi > \zeta)| \Gi)= 1-G (\zeta) .$$
\end{lemo}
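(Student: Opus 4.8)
The plan is to prove both parts by directly verifying the defining property of conditional expectation: a random variable $Y$ equals $\exn(Z\mid\Gi)$ once it is $\Gi$-measurable, integrable, and satisfies $\exn(Y\ind_G)=\exn(Z\ind_G)$ for every $G\in\Gi$. In each part the proposed right-hand side is plainly $\Gi$-measurable, so I would only need to check integrability together with the family of integral identities over sets $G\in\Gi$.

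For part~(i), I would first record that $\xi\zeta$ is integrable: independence of $\xi$ from the pair $(\zeta,\Gi)$ entails independence of $\xi$ and $\zeta$, whence $\exn|\xi\zeta|=\exn|\xi|\,\exn|\zeta|<\infty$, while $\exn\xi\cdot\exn(\zeta\mid\Gi)$ is integrable as a constant times an integrable $\Gi$-measurable variable. Fixing $G\in\Gi$, I would evaluate the two sides separately. Pulling out the constant and using the defining property of $\exn(\zeta\mid\Gi)$ gives
\[
\exn\bigl(\exn\xi\cdot\exn(\zeta\mid\Gi)\,\ind_G\bigr)=\exn\xi\cdot\exn\bigl(\exn(\zeta\mid\Gi)\,\ind_G\bigr)=\exn\xi\cdot\exn(\zeta\ind_G).
\]
For the other side, I would observe that $\zeta\ind_G$ is $\sigma(\zeta)\vee\Gi$-measurable and that independence of $\xi$ from $(\zeta,\Gi)$ means independence of $\xi$ from $\sigma(\zeta)\vee\Gi$, hence from $\zeta\ind_G$; the product rule for independent integrable variables then yields $\exn(\xi\cdot\zeta\ind_G)=\exn\xi\cdot\exn(\zeta\ind_G)$. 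Since the two displays agree for every $G\in\Gi$, the claim follows.

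For part~(ii), I would take $1-G(\zeta)$ as the candidate, which is $\Gi$-measurable (a Borel function of the $\Gi$-measurable $\zeta$) and bounded. Writing $P_\xi$ for the law of $\xi$, independence of $\xi$ and $\Gi$ makes the joint law of $\xi$ and the $\Gi$-measurable data $(\zeta,\ind_C)$ a product measure, so for any $C\in\Gi$ Fubini's theorem would give
\[
\exn\bigl(\ind(\xi>\zeta)\,\ind_C\bigr)=\int_\R\exn\bigl(\ind(x>\zeta)\,\ind_C\bigr)\,P_\xi(dx)=\exn\Bigl(\ind_C\int_\R\ind(x>\zeta)\,P_\xi(dx)\Bigr),
\]
and I would then identify the inner integral as $P_\xi\bigl((\zeta,\infty)\bigr)=1-G(\zeta)$, using $G(y)=\pr(\xi\le y)$ and the strict inequality in the indicator. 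This produces $\exn(\ind(\xi>\zeta)\ind_C)=\exn((1-G(\zeta))\ind_C)$ for all $C\in\Gi$, as needed; alternatively one could quote the standard ``freezing'' principle for independent variables, itself obtained by a monotone-class extension from product indicators.

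I do not expect a genuine obstacle, as both assertions are routine; the only points requiring care are conceptual. In part~(i) it is the upgrade from independence of $\xi$ and the pair $(\zeta,\Gi)$ to independence of $\xi$ and the single variable $\zeta\ind_G$ through the generated $\sigma$-algebra $\sigma(\zeta)\vee\Gi$, and in part~(ii) it is the rigorous justification of the Fubini interchange, which relies precisely on independence to factorize the joint law. Neither step is deep.
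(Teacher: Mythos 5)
Your proof is correct and follows essentially the same route as the paper's: both parts are established by verifying the partial-averaging identity $\exn (Y\ind_A)=\exn (Z\ind_A)$ over $A\in\Gi$, with independence supplying the key factorization. The only cosmetic difference is in part~(ii), where you justify that factorization via Fubini's theorem applied to the product of the law of $\xi$ with the joint law of the $\Gi$-measurable data, whereas the paper disintegrates over the law of $\zeta$; your version is, if anything, slightly more explicit about why the crucial step holds.
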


\begin{proof}Both statements can be verified by  straightforward computations. First observe that the right-hand sides in the above relations are clearly  $\mathcal{G}$-measurable. Second, for an arbitrary $A\in  \mathcal{G}$, in case~(i) by independence one has $\exn  \xi \zeta \ind_A = \exn \xi\exn \zeta \ind_A = \exn  \xi \exn (\exn(\zeta| \mathcal{G} )\ind_A) ,$  yielding the desired relation, whereas in case~(ii) one has
\begin{align*}
\exn  \ind (\xi > \zeta)\ind_A
% = \int \exn (\ind (\xi > Y)\ind_A |Y=y)\pr (Y\in dy)
 &  = \int \exn (\ind (\xi> \zeta)\ind_A |\zeta=y)\pr (\zeta\in dy)
  \\
  & = \int \exn  \ind (\xi > y)\exn (\ind_A |\zeta=y)\pr (\zeta\in dy)
  \\
  &
  =  \int (1-G(y))\exn (\ind_A |\zeta =y)\pr (\zeta\in dy) =  \exn   (1-G(\zeta))\ind_A ,
\end{align*}
which establishes the second desired relation.
\end{proof}

\begin{proof}[Proof of Theorem~\ref{T1}] Our initial step is similar to the one from~\cite{LoCzPa13}. The probability of no Parisian ruin when the initial reserve is $u>0$ equals
\begin{align}
\pr_u (N=\infty)
& =
\pr_u (\tau_0^-=\infty) + \pr_u (\tau_0^-<\infty, N=\infty)
\notag \\
& =
\pr_u (\tau_0^-=\infty) + \exn_u \exn_u \bigl(\ind (\tau_0^-<\infty)\ind (N=\infty)|\Fi_{\tau_0^-}\bigr)
\notag \\
& =
\pr_u (\tau_0^-=\infty) + \exn_u \bigl[ \ind (\tau_0^-<\infty) \exn_u\bigl(\ind (N=\infty)|\Fi_{\tau_0^-}\bigr)\bigr].
\label{Pu-1}
\end{align}
Observe that, by the strong Markov property and the absence of positive jumps, on the event $\{\tau_0^-<\infty\}$ the process 
\begin{align}
\label{tilde_X}
\widetilde X:=\{\widetilde X_t:=X_{\tau_{0,1}^+ +t}\}_{t\ge 0} 
\end{align}
is an independent of $\Fi_{ \tau_{0,1}^+}$ L\'evy process with cumulant~\eqref{Cumul} and initial value  $ \widetilde X_0=0$ (see, e.g., Theorem~3.1 in~\cite{Ky14}).
We will keep all the notations we introduced for the functionals of the process $X$ for the respective functionals of $\widetilde{X}$, endowing them with a tilde,  so that, say, $\widetilde{N}$ denotes the total number of negative excursions in $\widetilde{X}$ needed for the Parisian ruin when the risk reserve dynamics are  represented by that process  ($\widetilde{N}=\infty$ if there is no such ruin).

Now we can write that, on the event $\{\tau_0^-<\infty\},$ one has
\begin{align}
\exn_u\bigl(\ind (N=\infty)|\Fi_{\tau_0^-}\bigr)
  & =
 \exn_u\bigl(\ind (\tau_0^- + \eta_1 \ge \tau_{0,1}^+) \ind (\widetilde N=\infty)|\Fi_{\tau_0^-}\bigr)
\notag
 \\
 & =
 \exn_u\bigl[\exn_u\bigl(\ind (\tau_0^- + \eta_1 \ge  \tau_{0,1}^+)  \ind (\widetilde N=\infty)| \Fi_{ \tau_{0,1}^+}\bigr)\big|\Fi_{\tau_0^-}\bigr]
 \notag \\
 & = \pr_0 (N=\infty)
 \exn_u\bigl[\exn_u\bigl(\ind ( \eta_1\ge \tau_{0,1}^+ -\tau_0^- )  | \Fi_{ \tau_{0,1}^+}\bigr) \big| \Fi_{\tau_0^-}\bigr],
 \label{Pu-3}
\end{align}
where, to get the third equality, we used Lemma~\ref{L1}(i) with $\xi =\ind (\widetilde N=\infty) $ to re-express  the inner conditional expectation in the second line.
As $$\{\eta_1\ge \tau_{0,1}^+ -\tau_0^- \}
 =\{  F_{\chi_1}^\leftarrow (U_1)\ge \tau_{0,1}^+ -\tau_0^-\}
 = \{U_1 \ge F_{\chi_1} (\tau_{0,1}^+ -\tau_0^-) \}$$ and $ F_{\chi_1} (\tau_{0,1}^+ -\tau_0^-)$ is $\Fi_{ \tau_{0,1}^+}$-measurable, we  conclude from Lemma~\ref{L1}(ii) that
 \[
 \exn_u\bigl(\ind ( \eta_1\ge \tau_{0,1}^+ -\tau_0^- )  | \Fi_{ \tau_{0,1}^+}\bigr)
 =
 \exn_u\bigl[\ind (U_1 \ge F_{\chi_1} (\tau_{0,1}^+ -\tau_0^-) )  | \Fi_{ \tau_{0,1}^+}\bigr]
  =
  \overline{F}_{\chi_1}  (\tau_{0,1}^+ -\tau_0^-).
 \]
Now setting, for $a >0,$ 
\begin{align}
\label{hat_X}
\widehat{X}:= \{\widehat{X}_t:= X_{\tau_0^-+t}- \chi_1 \}_{t\ge 0}, \quad 
\widehat{\tau}_{a}^+:= \inf \{t>0:\widehat{X}_t  >a \},
\end{align}
we obtain that, on the event $\{\tau_0^-<\infty\},$ the second factor  in the last line of~\eqref{Pu-3} equals
\[
\exn_u\bigl(\overline{F}_{\chi_1}  (\tau_{0,1}^+ -\tau_0^-) \big| \Fi_{\tau_0^-}\bigr)
 = 
 \exn_u \bigl(\overline{F}_{\chi_1}  (\widehat{\tau}_{|\chi_1|}^+) \big|\Fi_{\tau_0^-}\bigr) 
 =  
 \exn_u \bigl(\overline{F}_{\chi_1}  (\widehat{\tau}_{|\chi_1|}^+) \big| \chi_1\bigr)= K(\chi_1) ,
\]
where, to get the last two equalities,  we used the observation that, 
on that event, by the strong Markov property, the process $\widehat{X} $   is an independent of $\Fi_{ \tau_{0 }^-}$ (and hence of $\chi_1$) L\'evy process with cumulant~\eqref{Cumul} and initial value $ \widehat X_0=0$ (recall that the function $K$ was defined  in~\eqref{function_K}). From this and \eqref{Pu-1}, \eqref{Pu-3} we derive that
\begin{align*}
\pr_u (N=\infty)
& =
\pr_u (\tau_0^-=\infty) +
\pr_0  (N=\infty)  \exn_u (K(\chi_1); \tau_0^- <\infty). 
\end{align*}

Setting now $u=0$ yields
\begin{align*}
\pr_0  (N=\infty) = \frac{\pr_0 (\tau_0^-=\infty)}{1- \exn_0 (K(\chi_1); \tau_0^- <\infty)}.
\end{align*}
Recalling that, in the case of an SNLP  with  positive  drift, one has $\pr_u (\tau_0^-=\infty)=\exn_0 X_1 W(u)$ (see, e.g., Theorem~8.1(ii) in~\cite{Ky14}), we obtain that
\begin{align*}
\pr_u (N=\infty)
& =
\exn_0 X_1\biggl[
W(u) + W(0) \frac{\exn_u (K(\chi_1); \tau_0^- <\infty)}{1- \exn_0 (K(\chi_1); \tau_0^- <\infty)}\biggr].
\end{align*}
Expressing the expectations inside the square brackets in terms of the function $H$ defined in~\eqref{function_H} yields representation~\eqref{Formula_T1}. This completes the proof of Theorem~\ref{T1}.
\end{proof}

In the proof of Theorem~\ref{T2} we will use the following observation  that may be well-known, but for which we could not find a suitable reference.

\begin{lemo}\label{L2}
	Assume that $\tau$ and $\sigma $ are  stopping times relative to a  filtration $\{\Hi_t\}_{t\ge 0}.$ Then   $ \{\tau<\sigma\}\in\Hi_\tau.$
\end{lemo}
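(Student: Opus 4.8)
The plan is to verify directly the defining property of the stopping-time $\si$-algebra, recalling that
\[
\Hi_\tau=\{A:\ A\cap\{\tau\le t\}\in\Hi_t\ \text{ for all } t\ge 0\}.
\]
Hence it suffices to fix an arbitrary $t\ge 0$ and show that $\{\tau<\sigma\}\cap\{\tau\le t\}\in\Hi_t$. The guiding idea is that a \emph{strict} inequality between the two stopping times can be resolved through a countable family of \emph{rational} thresholds, at which the stopping-time properties of $\tau$ and $\sigma$ become directly usable.

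Concretely, I would establish the set identity
\[
\{\tau<\sigma\}\cap\{\tau\le t\}
=\Big(\bigcup_{q\in\Q\cap[0,t)}\{\tau\le q\}\cap\{q<\sigma\}\Big)\cup\big(\{\tau\le t\}\cap\{t<\sigma\}\big).
\]
The inclusion ``$\supseteq$'' is immediate, since any point of the first union satisfies $\tau\le q<t$ and $\tau\le q<\sigma$, while any point of the second satisfies $\tau\le t<\sigma$. For ``$\subseteq$'', on the left-hand event I would split into two cases: if $t<\sigma$, the point lies in the second set; if instead $\sigma\le t$, then $\tau<\sigma\le t$, so the half-open interval $[\tau,\sigma)$ has positive length and therefore contains a rational $q$, which automatically obeys $q<\sigma\le t$ and hence $q\in[0,t)$, placing the point in the first union.

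Granting this identity, membership in $\Hi_t$ is routine. For each $q\in\Q\cap[0,t)$ one has $\{\tau\le q\}\in\Hi_q\subseteq\Hi_t$ and $\{q<\sigma\}=\{\sigma\le q\}^{c}$ with $\{\sigma\le q\}\in\Hi_q\subseteq\Hi_t$, so every term of the countable union lies in $\Hi_t$; likewise $\{\tau\le t\}\in\Hi_t$ and $\{t<\sigma\}=\{\sigma\le t\}^{c}\in\Hi_t$. A countable union of sets in $\Hi_t$ is again in $\Hi_t$, so the whole event belongs to $\Hi_t$, and since $t\ge 0$ was arbitrary we conclude $\{\tau<\sigma\}\in\Hi_\tau$.

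The only delicate point — and the reason for isolating the term $\{\tau\le t\}\cap\{t<\sigma\}$ instead of naively writing $\bigcup_{q\le t}$ — is the boundary configuration $\tau=t<\sigma$ with $t$ irrational: there no rational $q$ simultaneously satisfies $\tau\le q<\sigma$ and $q\le t$, so this sliver of the event would be missed by a union over rationals alone and must be captured by the separate term. Handling this case correctly is the main (if mild) obstacle; the remainder is a direct application of the filtration monotonicity and the definitions of $\tau$ and $\sigma$ as stopping times.
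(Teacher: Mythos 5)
Your proof is correct and follows essentially the same route as the paper's: both split the event $\{\tau<\sigma\}\cap\{\tau\le t\}$ according to whether $\sigma>t$ (where the event reduces to $\{\tau\le t\}\cap\{\sigma>t\}\in\Hi_t$) or $\sigma\le t$ (where a countable union over rational thresholds $q$ with $\tau\le q<\sigma$ handles the strict inequality). The only differences are cosmetic --- the paper writes the rational-indexed sets as $\{\tau\le r\}\cap\{r<\sigma\le t\}$ rather than your $\{\tau\le q\}\cap\{q<\sigma\}$ --- and your set identity and measurability checks are all valid.
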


\begin{proof}
For $t\ge 0,$ we have
\begin{align*}
\{\tau<\sigma\}\cap\{\tau \le t\} & =  \{\tau<\sigma,\tau \le t, \sigma > t\}  \cup \{\tau<\sigma,\tau \le t, \sigma  \le t\} \\
& =
 \{ \tau \le t, \sigma > t\}  \cup \{\tau<\sigma,  \sigma  \le t\}.
\end{align*}
The first event in the union in the second line is clearly in $\Hi_t$, whereas for the second one we have
\[
\{\tau<\sigma,  \sigma  \le t\}
 = \bigcup_{r\in \mathbb{Q},\ r<t} ( \{\tau\le r\} \cap \{r<\sigma \le t\}),
\]
where obviously $\{\tau\le r\}\in \Hi_r\subseteq \Hi_t$ and $\{r<\sigma \le t\}\in \Hi_t$ when $r< t$. Lemma~\ref{L2} is proved.
\end{proof}

\begin{proof}[Proof of Theorem~\ref{T2}] Our starting point is to observe that, for $u, v, w\ge 0,$ one has 
\begin{align}
\exn_u (e^{-v T + w X_T}  &; T<\tau_b^+) =
%\exn_u  e^{-v T + w X_T}\ind(T<\tau_b^+)\ind(\tau_0^-<\tau_b^+) \\& =
\exn_u \exn_u  (e^{-v T + w X_T}\ind(T<\tau_b^+)\ind(\tau_0^-<\tau_b^+) |\Fi_{\tau_0^-} )
\notag
\\
& =
\exn_u \bigl[e^{-v \tau_0^-} \ind(\tau_0^-<\tau_b^+) \exn_u  (e^{-v (T -\tau_0^-)+ w X_T}\ind(T<\tau_b^+) |\Fi_{\tau_0^-} )\bigr],
\label{Euu}
\end{align}	
where the second equality follows from Lemma~\ref{L2}. The conditional expectation in the second line is equal to $E_1 + E_2$, where
\begin{align*}
E_1 & := \exn_u  (e^{-v (T -\tau_0^-)+ w X_T}\ind(T<\tau_b^+)\ind(N=1)  |\Fi_{\tau_0^-} ),
\\
E_2& := \exn_u  (e^{-v (T -\tau_0^-)+ w X_T}\ind(T<\tau_b^+)\ind(N>1)  |\Fi_{\tau_0^-} ).
\end{align*}
First we will evaluate $E_1.$ On the event $\{\tau_0^-<\tau_b^+, N=1\}= \{ \tau_0^- <\tau_b^+, \eta_1 < \widehat{\tau}^+_{|\chi_1|}\} $ (see~\eqref{hat_X}), one has $T= \tau_0^- + \eta_1$, $X_T= X_{\tau_0^- + \eta_1} =   \chi_1 + \widehat{X}_{\eta_1}$  and automatically $ T<\tau_b^+ $ (as the Parisian ruin occurs during the first negative excursion and that excursion started prior to time~$\tau_b^+$). Therefore, on  the event $\{\tau_0^-<\tau_b^+\}\in \Fi_{\tau_0^-}$, one has 
\begin{align*}
E_1 & =  \exn_u  (e^{-v \eta_1 + w (\widehat{X}_{\eta_1} +\chi_1) } \ind(\tau_0^-<\tau_b^+) \ind(N=1)  |\Fi_{\tau_0^-} )
 \\
 & =  e^{ w  \chi_1  } 
  \exn_u  (e^{-v \eta_1 + w \widehat{X}_{\eta_1}   } \ind(\eta_1 <\widehat{\tau}^+_{|\chi_1|})    |\Fi_{\tau_0^-} ).
\end{align*}
On the event $\{\tau_0^-<\tau_b^+\} $ the process  $\widehat{X}$ is  an independent of $\Fi_{\tau_0^-}$ distributional copy of $X$ (cf.\ our comment after~\eqref{hat_X}), so that the only random component inside the conditional expectation in the second line that is not independent of $ \Fi_{\tau_0^-}$ is $\chi_1$ (it participates in  both  $\eta_1$ and $\widehat{\tau}^+_{|\chi_1|}$). We conclude that, on the event $\{\tau_0^-<\tau_b^+\} $, that conditional expectation equals 
\begin{align*}
\exn_u  (e^{-v \eta_1 + w \widehat{X}_{\eta_1}   } \ind(\eta_1 <\widehat{\tau}^+_{|\chi_1|})    |\chi_1 )
 = 
\exn_u \bigl[e^{-v \eta_1 } \exn_u  (e^{  w \widehat{X}_{\eta_1}   } \ind(\eta_1 <\widehat{\tau}^+_{|\chi_1|})    |\chi_1, \eta_1 )\big|\chi_1\bigr].
\end{align*}
Given that $\chi_1= x<0,$ $\eta_1=t>0,$ the inner conditional expectation on the right-hand side of the above formula is equal to $\exn_0   e^{  w X_t   } \ind(t < \tau^+_{|x|})$. This expression  can be computed similarly to the argument used in the proof of Lemma~4.3 in~\cite{LoPaSu18}: 
\begin{align*}
\exn_0   e^{  w X_t   } \ind(  \tau^+_{|x|}>t) 
& = \exn_0   e^{  w X_t   }  
- \exn_0   e^{  w X_t   } \ind(  \tau^+_{|x|}\le t) 
\notag\\
& = e^{t \psi(w)} - \int_{(0,t]} 
 \exn_0  ( e^{  w X_t   } | \tau^+_{|x|}=s) \pr_0 (\tau^+_{|x|}\in ds)
\notag \\
& = e^{t \psi(w)} - e^{w|x|}\int_{(0,t]} 
 \exn_0  ( e^{  w (X_t -X_s)   } | \tau^+_{|x|}=s) \pr_0 (\tau^+_{|x|}\in ds)
\notag \\
& = e^{t \psi(w)} - e^{-wx  +t \psi(w) }\int_{(0,t]}  e^{-s \psi(w)} 
 \pr_0 (\tau^+_{|x|}\in ds),
 \\
 & =  e^{t \psi(w)} - e^{-wx} \Lambda^{(\psi (w))} ( x,t),
  \label{psi_int}
\end{align*}
where we used the spectral negativity of $X$  and the strong Markov property to get the third and fourth  equalities and representation~\eqref{exp_tau} to get the fifth one. Combining these computations, we obtain that, on the event $\{\tau_0^-<\tau_b^+\} ,$  one has 
\begin{align*}
E_1 & =   e^{ w  \chi_1  } \exn_u [ e^{-v\eta_1}
( e^{  \psi(w) \eta_1} - e^{-w\chi_1} \Lambda^{(\psi (w))}  (\chi_1,\eta_1))|\chi_1]
\\
& =    \exn_u   
( e^{  (\psi(w)-v) \eta_1 +  w  \chi_1} -  e^{-v\eta_1} \Lambda^{(\psi (w))}  (\chi_1,\eta_1) |\chi_1) = M_1 ( v,w,\chi_1), 
\end{align*}
where   $M_1 ( v,w, x)$ was introduced in~\eqref{M1}.

Now we will turn to the term~$E_2$. On the event $\{T<\tau_b^+\},$  relation $N>1$ is equivalent to $\tau_0^-+\eta_1 \ge \tau_{0,1}^+$, so that on the event $\{\tau_0^-<\tau_b^+\}$ one has 
\begin{align*}
E_2&  = \exn_u  (e^{-v (T -\tau_0^-)+ w X_T}\ind(T<\tau_b^+)\ind(\tau_0^-+\eta_1 \ge \tau_{0,1}^+)  |\Fi_{\tau_0^-} )
\\
& = \exn_u  \big[e^{-v (\tau_{0,1}^+ -\tau_0^-) }\ind(\tau_0^-+\eta_1 \ge \tau_{0,1}^+) 
\exn_u (e^{-v (T -\tau_{0,1}^+   ) + w  {X}_T} \ind(T<\tau_b^+)|\Fi_{\tau_{0,1}^+},\eta_1 )\big|\Fi_{\tau_0^-} \big]
\\
& 
= \exn_u  \big[e^{-v (\tau_{0,1}^+ -\tau_0^-) }\ind(\tau_0^-+\eta_1 \ge \tau_{0,1}^+) 
\exn_u (e^{-v \widetilde{T} + w \widetilde{X}_{\widetilde{T}}} \ind(\widetilde T<\widetilde \tau_b^+)|\Fi_{\tau_{0,1}^+},\eta_1 )\big|\Fi_{\tau_0^-} \big],
\end{align*}
where we used the process $\widetilde X$ from~\eqref{tilde_X} (and the relevant to it random times $\widetilde T, \widetilde \tau_b^+$) and the observation that the relation $T<\tau_b^+$ is equivalent to $ \widetilde T<\widetilde \tau_b^+ $ provided that $ \tau_0^-<\tau_b^+.$ Using the strong Markov property and the fact that $\widetilde X_0=0$, we conclude that 
\begin{align*}
E_2&  =   \exn_u  \bigl(e^{-v (\tau_{0,1}^+ -\tau_0^-) }\ind(\tau_0^-+\eta_1 \ge \tau_{0,1}^+) 
  \big|\Fi_{\tau_0^-} \bigl)
  \exn_0 (e^{-v  {T} + w  {X}_{T}} ;   T<  \tau_b^+)
  \\
  & = \exn_u  \bigl(e^{-v  \widehat\tau_{|\chi_1|}^+}  \ind(F^{\leftarrow}_{\chi_1} (U_1) \ge \widehat\tau_{|\chi_1|}^+) 
  \big|\chi_1 \bigr)
  \exn_0 (e^{-v  {T} + w  {X}_{T}} ;   T<  \tau_b^+)
  \\
  & = M_2 (v,  \chi_1) \exn_0 (e^{-v  {T} + w  {X}_{T}} ;   T<  \tau_b^+),
\end{align*}
where $M_2 (v, x) $ was introduced in~\eqref{M2}.

Substituting the computed values for $E_1$ and $E_2$ into~\eqref{Euu} yields 
\begin{align*} 
\exn_u ( & e^{-v T + w X_T}   ; T<\tau_b^+)  
\\
& =   
\exn_u \bigl[e^{-v \tau_0^-} \ind(\tau_0^-<\tau_b^+) (M_1 (v,w,\chi_1)+ M_2 (v,\chi_1)\exn_0 (e^{-v  {T} + w  {X}_{T}} ;   T<  \tau_b^+) )\bigr]
\\
& = Q_1 (u,v,w) +  Q_2 (u,v )\exn_0 (e^{-v T + w X_T}    ; T<\tau_b^+) .
\end{align*}
%$ \exn_u  e^{-v \tau_0^-} \ind(\tau_0^-<\tau_b^+)  M  (\chi_1) $
Setting $u=0$ we recover $\exn_0 (e^{-v T + w X_T}    ; T<\tau_b^+)$ as 
 $Q_1 (0,v,w)/ (1-  Q_2 (0,v )).$ Substituting this back into the above formula completes the proof of Theorem~\ref{T2}. 
\end{proof}

\section{Examples}

Consider the classical CL model:
\[
X_t = X_0 + ct -\sum_{j=1}^{A_t} \xi_j, \quad t\ge 0, 
\]
where  $c>0$ is a constant premium payment rate and the  Poisson claims arrival process $\{A_t\}_{t\ge 0 }$ with rate $\lambda >0$ is independent of the  sequence of i.i.d.\ exponentially distributed claim sizes $\{\xi_n\}_{n\ge 1}$ with rate $\al >0.$

Clearly, in this case one  has $\psi (\theta)= c\theta +\la (\frac{\al}{\al+\theta}-1),$ $\theta >-\al,$  so that condition~\eqref{Solvency} turns into 
\[
\exn_0 X_1 = c- \la / \al >0.
\]
Elementary computation yields 
\[
\Phi (q) = \frac1{2c} \Bigl(\sqrt{(\al c -\la - q)^2 + 4 q\al  c} - (\al c -\la - q)\Bigr), \quad q\ge 0,
\]
and 
\begin{align*}
%\label{WEx}
W(x) = \frac{\al}{\al c- \la }\Bigl(1- \frac{\la }{\al c} 
 e^{-(\al - \la /c)x}\Bigr)\ind (x\ge 0), \quad \mbox{with \ } W(0)= \frac1c 
\end{align*}
(see p.\,251 in~\cite{Ky14}). Note that an explicit expression (in the form of an infinite series) is also available for the $q$-scale function $W^{{(q)}}$ for this model (Example 5.3 in~\cite{BeOe20}). 

It is well-known that, for this model, one has 
\begin{align}
\label{Prob_ruin_CPP}
\pr_u(\tau_0^-<\infty) = \frac{\la}{\al c}e^{- (\al - \la/c)u}, \quad u\ge 0
\end{align} 
(see, e.g., p.\,78 in~\cite{AsAl10}). Due to the memoryless property of the exponential distribution, the conditional distribution of $-\chi_1$ (given that~$X$ ever turns negative) will coincide with the distribution of~$\xi_1$, so that 
\[
\pr_u (\chi_1 \le x ,  \tau_0^-<\infty  )
 = \pr_u (\chi_1 \le x \,  |\, \tau_0^-<\infty  )\pr_u (\tau_0^-<\infty)
  =\frac{\la}{\al c}e^{\al x- (\al - \la/c)u},\quad x\le 0. 
\]
Therefore 
\[
h_u (x) = \frac{\la}{ c}e^{\al x- (\al - \la/c)u},\quad x< 0,
\]
and hence
\begin{align}
\label{H_exp}
H(v) & = \int_{-\infty}^0 K(x) h_v (x) \, dx   
= H(0) e^{ - (\al - \la/c)v}, \quad 
H(0)=  \frac{\la}{ c}  \int_{-\infty}^0 e^{\al x} K(x)  dx   .
\end{align}
Substituting the obtained expressions into~\eqref{Formula_T1} yields
\begin{align}
\label{Paris_ru_CL}
\pr_u (N<\infty) = \frac{\la}{\al c} \biggl[1 -   \frac{(\al c-\la) H(0)}{\la (1-H(0))}\biggr]
e^{- (\al - \la/c)u}, \quad  u\ge 0.
\end{align}
Comparing this with~\eqref{Prob_ruin_CPP}, we see that, for the CL risk reserve process model, the Parisian ruin probability differs from the ``usual'' one by having a smaller constant factor in front of the same exponential term.

To compute the value of $H(0)$ in~\eqref{Paris_ru_CL}, we need to specify the   distribution  of the delay window length. We will consider two special cases. 

\medskip 

{\em Case 1.} Assume that the conditional distribution of the window length is exponential with parameter depending on the deficit: there is a Borel function $r:(-\infty, 0)\to (0,\infty]$ such that $\overline{F}_x (t) =e^{-r (x)t},$ $t>0 $ (where the value $r(x)=\infty$ means immediate ruin when  $\chi_1$ is equal to that~$x$). Then, by Theorem~3.12 in~\cite{Ky14},
\begin{align}
\label{KPhi}
K(x) & = \int_0^\infty \overline{F}_x (t)dG_{|x|}(t)
 %= \int_0^\infty e^{-r (x)t} dG_{|x|}(t)
  = \exn_0 e^{-r (x) \tau^+_{|x|}}  = e^{  \Phi (r(x))x}, \quad x<0,
\end{align}
and hence 
\begin{align}
\label{H_exp0}
H(0)=  \frac{\la}{ c}  \int_{-\infty}^0 e^{ [\al + \Phi (r(x))]x} dx   .
\end{align}
This  quantity   can be explicitly evaluated, for instance, in the special case when $r(x)$ is piece-wise constant:
\[
r(x):= \sum_{k=1}^n r_k \ind (x\in (a_{k-1}, a_k])
\] 
for some  $n\ge 1,$   $r_k\in (0,\infty],$ $k=1,\ldots, n,$ and  $-\infty=:a_0 <a_1 < \cdots < a_{n-1}<a_n:=0.$  Then~\eqref{H_exp0} turns into 
\begin{align*}
%\label{H_exp}
H(0) &   
=  \frac{\la}{ c} \sum_{k=1}^n \int_{a_{k-1}}^{a_k} e^{ (\al + \Phi (r_k))x} dx  =     \frac{\la}{ c}\sum_{k=1}^n \frac{e^{ (\al + \Phi (r_k))a_{k}}- e^{ (\al + \Phi (r_k))a_{k -1}}}{ \al + \Phi (r_k)},
\end{align*} 
the terms in the sum with $r_k=\infty$ being equal to~0. This example admits a straightforward extension to the case when $F_x,$ $x<0,$ are hyperexponential distributions.

\medskip 

{\em Case 2.} Assume now that the conditional distribution of the window length is a finite mixture of Erlang distributions with  parameters depending on the deficit: for an $m\ge 1,$ there are  Borel functions $p_j :(-\infty, 0)\to [0,1],$ $\sum_{j=1}^m p_j (x) \equiv 1,$ $r_j :(-\infty, 0)\to (0, \infty],$ and $\nu_j (x) :(-\infty, 0)\to \Na,$ $j = 1, \ldots, m,$ such that, for $x<0,$  
\[
\overline{F}_x (t) = \sum_{j=1}^m p_j (x) \sum_{\ell =0}^{\nu_j(x)-1} \frac{(r_j(x) t)^\ell}{\ell!} e^{- r_j (x) t},\quad t>0,
\]
is the right distribution tail of  a mixture of (up to) $m$ components that are Erlang distributions with the respective shape and rate parameters $\nu_j(x),$ $r_j(x),$ $j=1,\ldots, m.$ 
Such mixtures form a rather large class: it is well-known to be everywhere dense in the  weak convergence topology in the class of continuous distributions on $(0,\infty)$ (see, e.g., p.\,153 in~\cite{Ti94}). 

In this case, 
\begin{align*}
K(x) & = \sum_{j=1}^m p_j (x) \sum_{\ell =0}^{\nu_j(x)-1} \frac{ r_j^\ell (x)   }{\ell!} \int_{0}^\infty  t ^\ell e^{- r_j (x) t} dG_{|x|} (t) \\
& = 
\sum_{j=1}^m p_j (x) \sum_{\ell =0}^{\nu_j(x)-1}   r_j^\ell(x)  \phi_\ell (r_j (x),x)  ,
\end{align*}
where we used the fact that, by~\eqref{KPhi} and the well-known property of Laplace transforms, 
\begin{align}
\label{Kx}
\int_{0}^\infty  t ^\ell e^{- r  t} dG_{|x|} (t)
 = \ell! \phi_\ell (r,x), \qquad  \phi_\ell (r,x):= \frac{(-1)^\ell}{\ell!} \frac{\partial^\ell}{\partial r^\ell}e^{\Phi (r)x}.
\end{align}
As in Case~1, we now assume that the functions participating in the definition of $\overline{F}_x$ are piece-wise constant. Namely, there exist  $-\infty=:a_0 <a_1 < \cdots < a_{n-1}<a_n:=0$ such that whenever the deficit at the beginning of a negative excursion of the risk reserve process  hits the interval $(a_{k-1}, a_k],$ the applicable delay window length will have one and the same distribution given by a finite mixture of Erlang distributions. More formally, for some 
  $p_{j,k}\in [0,1]$ ($\sum_{j=1}^m p_{ j,k}=1$),  $r_{j,k}\in (0,\infty],$   and $\nu_{j,k}\in\Na,$     one has $r_j(x):= \sum_{k=1}^n r_{j,k} \ind (x\in (a_{k-1}, a_k]),$ $ p_j(x):= \sum_{k=1}^n p_{j,k} \ind (x\in (a_{k-1}, a_k])$ and 
$ \nu_j(x):= \sum_{k=1}^n \nu_{j,k} \ind (x\in (a_{k-1}, a_k]).$ Then from~\eqref{H_exp} and~\eqref{Kx} we get the following expression that can be evaluated for any set of the model parameters: 
\[
H(0) = \frac{\la}c \sum_{k=1}^n
\sum_{j=1}^m p_{j,k} \sum_{\ell =0}^{\nu_{j,k}-1}   r_{j,k}^\ell \int_{a_{k-1}}^{a_k}e^{\al x} \phi_\ell (r_{j,k} ,x)    dx.
\]

 %	\section*{Acknowledgments}
%The author is grateful to  the anonymous referees and associate editor for their useful comments that helped him to improve the exposition of the paper.

\end{document}